\newtheorem {theorem}{Theorem}
\newtheorem {lemma}[theorem]{Lemma}
\newtheorem {conj}[theorem]{Conjecture}
\newtheorem {cor}[theorem]{Corollary}
\newtheorem {prop}[theorem]{Proposition}
\theoremstyle{remark}
\newtheorem* {remark}{Remark}
\DeclareMathOperator{\aff}{\it aff}
\DeclareMathOperator{\lin}{\it lin}
\DeclareMathOperator{\fan}{\it Fan}
\DeclareMathOperator{\conv}{\it conv}
\title{An upper bound for a valence of a face in a parallelohedral tiling}
\author[add1,add2]{Alexander Magazinov \fnref{fn1}}
\ead{magazinov-al@yandex.ru}
\address[add1]{Steklov Mathematical Institute of the Russian Academy of Sciences, 8 Gubkina street, Moscow 119991, Russia}
\address[add2]{Yaroslavl State University, 14 Sovetskaya street, Yaroslavl 150000, Russia}
\begin{document}

\begin{abstract}

Consider a face-to-face parallelohedral tiling of $\mathbb R^d$ and a $(d-k)$-dimensional face $F$ of the tiling. We prove that the valence of  
$F$ (i.e. the number of tiles containing $F$ as a face) is not greater than $2^k$. If the tiling is affinely equivalent to a Voronoi tiling for
some lattice (the so called Voronoi case), this gives a well-known upper bound for the number of vertices of a Delaunay $k$-cell. Yet we emphasize that such 
an affine equivalence is not assumed in the proof.

\end{abstract}

\begin{keyword}
tiling\sep parallelohedron\sep Voronoi conjecture
\end{keyword}

\maketitle

\section{Introduction}

The central point of the parallelohedra theory is the famous Voronoi conjecture.

\begin{conj}[Voronoi]\label{voronoi}

Every $d$-dimensional parallelohedron $P$ is affinely equivalent to a Dirichlet-Voronoi domain for some $d$-dimensional lattice.

\end{conj}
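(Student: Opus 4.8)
The plan is to reduce the conjecture to the existence of a \emph{canonical scaling} of the tiling $\mathcal{T}$ generated by $P$. Recall that $\mathcal{T}$ is affinely equivalent to a Voronoi tiling precisely when one can assign to every facet $F$ of $\mathcal{T}$ a positive real $n(F)$, constant on translation classes and equal on opposite facets, so that around every $(d-2)$-face the scaled outer unit normals close up. Indeed, if such $n$ exists, the linear map sending each facet vector $t_F$ to $n(F)\,e_F$, where $e_F$ is the outer unit normal of $F$, places every facet into the perpendicular bisector hyperplane of the corresponding lattice vector, which is exactly the defining property of a Dirichlet--Voronoi domain. So it suffices to construct such an $n$.

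Second, I would build $n$ locally from the belt structure. By the Minkowski--Venkov theory every $(d-2)$-face of $\mathcal{T}$ is surrounded by a \emph{belt} of either $4$ or $6$ facets. A $4$-belt consists of two pairs of parallel facets and imposes no constraint beyond $n(F)=n(-F)$; a $6$-belt (three antipodal pairs) imposes a single homogeneous linear relation among the three scaling values occurring on it, and this relation determines the third value from the other two. Thus, fixing $n$ positively on one facet and propagating its value across $\mathcal{T}$ along chains of facets linked by common $6$-belts produces a candidate function $n$.

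The main obstacle --- and the reason the conjecture is hard --- is \emph{global consistency} of this propagation. The candidate $n$ is well defined only if the product of the propagation factors around every closed chain of $6$-belts equals $1$; this is a cocycle condition. I would localize the obstruction to the $(d-3)$-faces of $\mathcal{T}$: because $\mathbb{R}^d$ is contractible, the dual cell complex is simply connected, so $H_1$ vanishes and every closed chain is a combination of the elementary cycles encircling the $(d-3)$-faces. Hence it suffices to verify the cocycle condition for each combinatorial type of $3$-dimensional dual cell. Here the valence bound $v(F)\le 2^{k}$ proved in this paper enters as a finiteness constraint: with $k=3$ it caps the number of tiles around a $(d-3)$-face at $8$, so the dual $3$-cells have at most $8$ vertices and the list of candidate local configurations is finite and explicitly enumerable.

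Finally, I would close the argument by checking the cocycle condition on this finite list, and I expect this step to carry the real difficulty. It requires a complete classification of the dual $3$-cells that can occur as links of $(d-3)$-faces, together with a verification that in each case the $6$-belt relations are mutually compatible. Where a configuration resists direct computation, the fallback is to exploit the affine freedom still available --- the scaling is unique only up to a global positive factor and the ambient linear group --- to pass to a normal form, and to use the central symmetry of $P$ and of its facets to force the closing relations. This is precisely the point at which the known partial results (the primitive Voronoi case and the low-dimensional cases) either succeed or stall.
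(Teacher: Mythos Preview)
The statement you are attempting to prove is Conjecture~\ref{voronoi}, which the paper explicitly records as \emph{open}: it is stated, not proved, and the paper's actual contribution (Theorem~\ref{2k}) is established without assuming it. So there is no ``paper's own proof'' to compare against, and any proposal that purports to settle it must be held to the standard of resolving a long-standing open problem.

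Your outline is the canonical-scaling approach going back to Voronoi and refined by Zhitomirskii and Ordine, and the first two reductions (to a canonical scaling, then to a cocycle condition around $(d-3)$-faces) are standard and correct in spirit. The genuine gap is the last step, and it is not merely ``difficult'': the finite list of $(d-3)$-fans has been known since Delaunay (the five types in Figure~\ref{f2} of this paper), so the valence bound $\nu\le 2^3$ adds no new leverage here. The obstruction is that for some of these local types the cocycle identity is \emph{not} a consequence of the local combinatorics alone; the closing relation involves global data about how the belts fit together across the whole tiling, and no one knows how to control it. Your own final paragraph concedes this --- you note that the known partial results ``stall'' precisely at this verification --- which means the proposal is a research programme rather than a proof. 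As written it does not close, and the missing idea is exactly the content of the conjecture.
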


Although the conjecture was posed in 1909 in the paper \cite{vor}, it has not been proved or disproved so far in the general case. However,
several significant partial solutions were obtained \cite{del,erd,ord,vor,zhi}.

Let $\mathcal T(P)$ be a face-to-face tiling of $\mathbb R^d$ by parallel copies of a parallelohedron $P$. Choose an arbitrary $(d-k)$-dimensional
face $F$ of the tiling. 

Denote by $\pi$ the orthogonal projection along $\lin F$ onto the complementary affine space
$(\lin F)^\bot$. Then there exists a complete $k$-dimensional cone fan $\fan (F)$ 
({\it the fan of $F$}) that splits $(\lin F)^\bot$ into convex polyhedral cones with vertex $\pi(F)$,
and a neighborhood $U = U(\pi(F))$ such that every face $F'\supset F$ corresponds to a cone $C\in \fan(F)$ satisfying 
$$\pi(F')\cap U = C\cap U.$$
Speaking informally, $\fan(F)$ has the same combinatorics as the transversal section of $\mathcal T(P)$ in a small neighborhood of $F$.

The definition above is equivalent to the definition of a {\it star} of a face $F$, introduced by Ryshkov and Rybnikov \cite{rry}, yet seems to be
more formal.

Studying the combinatorial structure of such cone fans proved to be an effective tool to verify the Voronoi conjecture in special cases \cite{del,ord,zhi}.

For fixed $k$ and varying $d$, it is a complicated problem to classify all possible combinatorial types of $\fan(F)$ if a positive answer to conjecture~\ref{voronoi} is not preassumed. The classification for $k = 2$ has been obtained in \cite{min} and consists of 2 combinatorial
types of cone fans shown in Figure \ref{f1}.

\begin{figure}[h]\label{f1}

      \centerline{\includegraphics[scale=0.75]{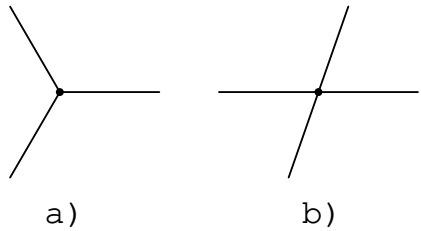}}
      \caption{2 possible fans of $(d-2)$-faces}
  
\end  {figure}%

for $k = 3$ Delaunay \cite{del} proved that every fan of a $(d-3)$-face of a parallelohedral tiling belongs to one of the 5 types shown in Figure \ref{f2}.

\begin{figure}[h]\label{f2}
   
      \centerline{\includegraphics[width = \textwidth]{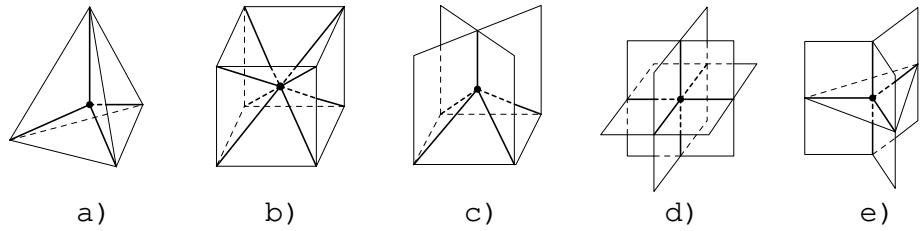}}
      \caption{5 possible fans of $(d-3)$-faces}
   
\end  {figure}

The case $k = 4$ has been partially considered in \cite{ord}, but the complete classification was not obtained, and for $k>4$ almost nothing is known. 
The main goal of this paper is to show that a classification is possible {\it in principle} for every $k$, i.e. the list of all combinatoral types of 
$\fan F$, where $F$ is a $(d-k)$-face, is finite. The idea is to obtain an upper bound for the number of tiling parallelohedra incident to a face $F$.

Let 
$$\nu(F) = card\,\{  P'\in \mathcal T(P) : F\subset P'  \}.$$
$\nu(F)$ will be called the {\it valence} of the face $F$.

\begin{theorem}\label{2k}

Let $F$ be a $(d-k)$-dimensional face of a parallelohedral tiling of $\mathbb R^d$. Then
$$\nu(F)\leq 2^k.$$

\end{theorem}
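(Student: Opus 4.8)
The plan is to replace the empty-sphere property of Delaunay cells (which is available only in the Voronoi case) by a Minkowski-averaging argument modulo the doubled lattice.

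Write the tiles of $\mathcal T(P)$ containing $F$ as $T_l = P + t_l$, $l = 0, \dots, \nu(F) - 1$, where the $t_l$ lie in the translation lattice $\Lambda$ and $t_0 = 0$. Since $\fan(F)$ is a complete $k$-dimensional fan whose maximal cones are in bijection with the $T_l$, the dual cell $D(F) := \conv\{t_0, \dots, t_{\nu(F)-1}\}$ is a $k$-dimensional polytope whose vertex set is exactly $\{t_0, \dots, t_{\nu(F)-1}\}$ (in the Voronoi case it is, up to translation, the Delaunay $k$-cell dual to $F$). Consequently, setting $L := \Lambda \cap \lin D(F)$, all the $t_l$ lie in the rank-$k$ lattice $L$, and it suffices to show that they represent pairwise distinct cosets of $2L$, for then $\nu(F) \le |L/2L| = 2^k$.

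For the key step, suppose this fails, say $t_i \equiv t_j \pmod{2L}$ with $i \ne j$. Put $m := \tfrac12(t_i + t_j)$; since $t_i - t_j \in 2L$ we have $m \in L \subseteq \Lambda$, so $P + m$ is a tile of $\mathcal T(P)$. Moreover $F \subseteq P + m$: for any $f \in F$ the points $f - t_i$ and $f - t_j$ both lie in $P$ (as $F \subseteq T_i$ and $F \subseteq T_j$), hence by convexity $f - m = \tfrac12\big((f - t_i) + (f - t_j)\big) \in P$, i.e.\ $f \in P + m$. Therefore $P + m$ is one of the tiles containing $F$, so $m = t_c$ for some $c$. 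But $t_i \ne t_j$ (distinct tiles have distinct translation vectors), so $t_c = \tfrac12(t_i + t_j)$ lies in the relative interior of the segment $[t_i, t_j]$ and thus is not a vertex of $D(F)$ — contradicting the description of the vertices of $D(F)$ above. This proves the theorem.

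The only non-elementary ingredient is the structural assertion used in the second paragraph: that $D(F)$ really is a $k$-dimensional polytope whose vertices are precisely the centres of the tiles containing $F$ (equivalently, that these centres span a $k$-dimensional affine subspace and are in convex position). I would either quote this from the standard theory of the dual ($L$-)complex of a parallelohedral tiling, or derive it directly from the $k$-dimensionality of $\fan(F)$ together with the face-to-face property and the belt (hexagon–parallelogram) conditions on $P$; this is where I expect the real technical work to be. The rest is just the convexity identity "$F \subseteq P + t_i$ and $F \subseteq P + t_j$ imply $F \subseteq P + \tfrac12(t_i+t_j)$" combined with a pigeonhole count, and the Voronoi conjecture is never invoked.
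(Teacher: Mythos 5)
Your argument has a genuine gap, and it is precisely the gap that the paper itself identifies as an open problem. The pigeonhole count $\nu(F)\le |L/2L|=2^k$ requires that all the centres $t_l$ of tiles containing $F$ lie in a rank-$k$ sublattice, i.e.\ that $\dim\aff D(F)\le k$. In the Voronoi case this follows from the empty-sphere characterisation of Delaunay cells, but for a general parallelohedral tiling the paper states explicitly that ``there are no satisfactory results on $\dim\aff D(F)$'' and that the inequality $\dim\aff D(F)\le k$ ``remains an open problem.'' The $k$-dimensionality of $\fan(F)$ does not rescue you: it gives a bijection between the tiles containing $F$ and the maximal cones of a complete $k$-dimensional fan, which is purely combinatorial information and does not force the centres themselves to lie in a $k$-dimensional affine subspace. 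So the ``structural assertion'' you hoped to quote from the standard theory of the dual complex is exactly the missing (and unknown) ingredient; everything else in your write-up --- the midpoint convexity identity and the parity argument modulo $2\Lambda$ --- is correct and is in fact the paper's Lemma~\ref{parity} and Proposition~\ref{vcase}, but those are presented there only as a proof in the Voronoi case.

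The paper's actual proof takes a different route that sidesteps the dimension question entirely. It first shows (Lemma~\ref{valence}) that $\nu(F)$ equals the number $m$ of faces of a single tile $P_0$ that are $\Lambda$-translates of $F$. It then projects these $m$ faces along $\lin F$ onto the $k$-dimensional complement, and proves (Lemma~\ref{antipod}) that the resulting $m$ points form an \emph{antipodal} set: for each pair, the common supporting hyperplane separating $P_0$ from the neighbouring translate, together with its translate, projects to a pair of parallel hyperplanes sandwiching the whole point set. Finally a Danzer--Gr\"unbaum homothety-and-volume argument (Lemma~\ref{card}) bounds any antipodal set in $\mathbb R^k$ by $2^k$. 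The crucial advantage is that the projection lands in a space of dimension exactly $k$ by construction, so no claim about $\dim\aff D(F)$ is ever needed. To repair your proof you would have to establish $\dim\aff D(F)\le k$ in general, which is not currently known.
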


\begin{remark}

The upper bound $2^k$ is sharp for all integer $d, k$ satisfying $0<k\leq d$; for example, for every $(d-k)$-face $F$ of a cubic tiling of $\mathbb R^d$ holds
$$\nu(F) = 2^k.$$

\end{remark}

Theorem \ref{2k} immediately implies

\begin{cor}\label{finiteness}

Given $k\in \mathbb N$, there exists a set of complete $k$-dimensional cone fans
$$\{\mathcal C^k_1, \mathcal C^k_2, \ldots, \mathcal C^k_{N(k)} \}$$
such that for every $d$, every $d$-parallelohedron $P$ and every $(d-k)$-face $F$ of $\mathcal T(P)$ the fan of $F$ is isomorphic to some $\mathcal C^k_i$.

\end{cor}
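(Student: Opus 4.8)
The plan is to show that the full combinatorial type of $\fan(F)$ is encoded by a family of subsets of a ground set whose size is bounded by $2^k$ \emph{uniformly in $d$}, and then to invoke the elementary fact that there are only finitely many such families up to isomorphism.

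First I would record the structure of $\fan(F)$ supplied by its definition. It is a complete $k$-dimensional fan whose cones share the vertex $\pi(F)$; translating so that $\pi(F)$ becomes the origin, identify $(\lin F)^\bot$ with $\mathbb R^k$. Each maximal ($k$-dimensional) cone is the image under $\pi$ of the tangent cone at $F$ of a tile $P'\supseteq F$; since $P'$ is bounded, these cones and hence all of their faces are pointed, and their number equals $\nu(F)$. By Theorem \ref{2k} we therefore have a complete pointed fan in $\mathbb R^k$ with $m=\nu(F)\le 2^k$ maximal cones. Label the maximal cones $\sigma_1,\dots,\sigma_m$ and, to every cone $\tau$ of the fan, associate the set $S(\tau)=\{\,i : \tau\subseteq\sigma_i\,\}$.

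The key step is to prove that each cone is recovered from this data, namely $\tau=\bigcap_{i\in S(\tau)}\sigma_i$. One inclusion is immediate. For the reverse, note that in a fan the intersection of two cones is a common face, so by induction $\tau'':=\bigcap_{i\in S(\tau)}\sigma_i$ is again a cone of the fan; it contains $\tau$, whence $\tau$ is a face of $\tau''$ and $S(\tau'')=S(\tau)$. If $\tau\neq\tau''$, pass to the quotient $\mathbb R^k/\lin\tau$. There the maximal cones containing $\tau$ form a complete fan, and they stay pointed because $\tau$ is a face of each $\sigma_i$ (so $\sigma_i\cap\lin\tau=\tau$); all of them contain the image of $\tau''$, which is a cone of positive dimension and hence a ray. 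But a complete fan of pointed cones cannot have all its maximal cones containing a common ray, since the opposite ray would force one of them to contain a full line. This contradiction gives $\tau=\tau''$. Consequently $\tau\mapsto S(\tau)$ is injective and $\tau\subseteq\tau'\iff S(\tau)\supseteq S(\tau')$, so the face poset of $\fan(F)$ is isomorphic, reversing order, to the subfamily $\{S(\tau)\}$ of the Boolean lattice $2^{\{1,\dots,m\}}$.

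Finally I would conclude. The combinatorial type of $\fan(F)$ is determined, up to relabelling the maximal cones, by this subfamily of $2^{\{1,\dots,m\}}$; since $m\le 2^k$ holds for every $d$, every $P$ and every $(d-k)$-face, the ground set always has at most $2^k$ elements, and there are only finitely many such subfamilies up to isomorphism. Choosing one representative fan for each isomorphism class that actually occurs yields the desired finite list $\mathcal C^k_1,\dots,\mathcal C^k_{N(k)}$. I expect the only genuine work to be the key step above, identifying each cone with the intersection of the maximal cones containing it; everything else is bookkeeping, and it is precisely the pointedness of the cones together with the completeness of $\fan(F)$ that make that step go through.
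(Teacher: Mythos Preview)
Your proof is correct and follows the same overall route as the paper: invoke Theorem~\ref{2k} to bound the number of maximal cones of $\fan(F)$ by $2^k$, and then conclude that only finitely many combinatorial types of complete $k$-dimensional fans with at most $2^k$ maximal cones can occur. The paper's own proof is a single sentence declaring this finiteness to be obvious; you go further and actually justify it, using pointedness of the projected tangent cones together with the map $\tau\mapsto S(\tau)$ to show that every cone is the intersection of the maximal cones containing it, so the face poset embeds order-reversingly into the Boolean lattice on at most $2^k$ elements --- more than the paper does, but precisely the detail it leaves to the reader.
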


\begin{proof}

Obviously, there is only a finite number of combinatorial types of complete $k$-dimensional cone fans splitting $\mathbb R^k$ into no more than 
$2^k$ full-dimensional convex polyhedral cones.
According to theorem \ref{2k}, the fan of $F$ necessarily belongs to one of those combinatorial types.

\end{proof}

For a centrally symmetric polytope $Q$ denote by $c(Q)$ its center of symmetry.

To proceed with the proof of theorem \ref{2k} recall several basic properties of parallelohedral tilings.

\begin{enumerate}

\item A parallelohedron $P$ is a centrally symmetric polytope (see~\cite{min}).

\item The set
$$\Lambda = \{c(P'): P'\in \mathcal T(P)\}$$ 
is a lattice (see also~\cite{min}). Under assumption $\mathbf 0 \in \Lambda$, one can also treat $\Lambda$ as a translation group.

\item \label{c3} If $P_1, P_2 \in \mathcal T(P)$ and $P_1\cap P_2 \neq \varnothing$, then $P_1\cap P_2$ is a centrally symmetric face of $\mathcal T(P)$.
Moreover,
$$c(P_1\cap P_2) = \frac {c(P_1)+c(P_2)}{2}.$$ 
A face $F$ of $\mathcal T(P)$ representable in the form $F = P_1\cap P_2$, where $P_1, P_2 \in \mathcal T(P)$, is called {\it standard} (see~\cite{dol}).

\end{enumerate}

\section{The Voronoi case}

For a better understanding of the aim, first restrict oneself to the Voronoi case. Start considering this case with a folklore lemma. 

\begin{lemma}\label{parity}
Let $P_1, P_2 \in \mathcal T(P)$ be the 2 distinct parallelohedra such that $c(P_1)$ and $c(P_2)$ belong to the same class modulo $2\Lambda$.
(In other words, $P_1$ and $P_2$ belong to the same parity class.) Then $P_1\cap P_2 = \varnothing$.
\end{lemma}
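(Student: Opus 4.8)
The plan is to use the Voronoi structure of the section directly, rather than any combinatorial feature of the tiling. Since we are in the Voronoi case, I may take $\mathcal T(P)$ to be the Dirichlet--Voronoi tiling of the lattice $\Lambda$, normalising $\mathbf 0\in\Lambda$; this is harmless, because passing to an affinely equivalent tiling preserves $\Lambda$, the partition of $\Lambda$ into classes modulo $2\Lambda$, and all intersections among tiles. Writing $v_i=c(P_i)\in\Lambda$, the tile $P_i$ is then precisely the closed Voronoi cell of $v_i$, i.e. $P_i=\{\,x\in\mathbb R^d : \|x-v_i\|\le\|x-p\|\ \text{for every}\ p\in\Lambda\,\}$.

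The crux of the argument is to exhibit a single lattice point that blocks any common point of $P_1$ and $P_2$. Because $v_1\equiv v_2\pmod{2\Lambda}$, the midpoint $m=\tfrac12(v_1+v_2)$ again lies in $\Lambda$, and $m\notin\{v_1,v_2\}$ since $v_1\neq v_2$. Suppose, for contradiction, that $x\in P_1\cap P_2$. Then $\|x-v_1\|\le\|x-m\|$ and $\|x-v_2\|\le\|x-m\|$, because $m\in\Lambda$. On the other hand, the median identity
$$\|x-m\|^2=\tfrac12\|x-v_1\|^2+\tfrac12\|x-v_2\|^2-\tfrac14\|v_1-v_2\|^2$$
(expand both sides, or invoke strict convexity of $y\mapsto\|x-y\|^2$), together with $v_1\neq v_2$, gives $\|x-m\|^2<\tfrac12\|x-v_1\|^2+\tfrac12\|x-v_2\|^2\le\max\bigl(\|x-v_1\|^2,\|x-v_2\|^2\bigr)$; hence $\|x-m\|<\|x-v_j\|$ for some $j\in\{1,2\}$, contradicting $x\in P_j$. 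Therefore $P_1\cap P_2=\varnothing$.

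I do not anticipate a genuine difficulty: the proof hinges on the single slightly clever point that the right lattice point to compare against is the midpoint of the two centres, and that is exactly where, and why, the hypothesis ``same class modulo $2\Lambda$'' is used --- without it the midpoint need not lie in $\Lambda$, and the conclusion plainly fails (adjacent tiles do exist). The only remaining items deserving a word of care are that the Voronoi cells must be taken closed, so that the strict inequality $\|x-m\|<\|x-v_j\|$ really does exclude $x$ from $P_j$, and the initial reduction to a Voronoi tiling. It is perhaps worth recording that the same computation yields a little more: each $P_i$ is contained in the closed half-space $\{x:\|x-v_i\|\le\|x-m\|\}$, and these two half-spaces are parallel, orthogonal to $v_1-v_2$, and disjoint, so $P_1$ and $P_2$ are in fact separated by a slab of width $\tfrac12\|v_1-v_2\|$.
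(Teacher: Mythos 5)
Your computation is correct as far as it goes, and within the Voronoi setting it is a clean self-contained argument: the parity hypothesis puts the midpoint $m=\tfrac12(v_1+v_2)$ in $\Lambda$, and the median identity forces $\|x-m\|<\|x-v_j\|$ for at least one $j$, so no point can belong to both cells; the quantitative conclusion that $P_1$ and $P_2$ are separated by a slab of width $\tfrac12\|v_1-v_2\|$ is a genuine bonus over the paper's proof.

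The problem is your very first step. The lemma is stated for two tiles of $\mathcal T(P)$, an arbitrary face-to-face tiling by translates of a parallelohedron $P$, and the paper later invokes it in exactly that generality (in the discussion of $D(F)$ for an arbitrary parallelohedral tiling, where the bound $\dim\aff D(F)\le k$ together with this lemma would give Theorem~\ref{2k}). Your reduction ``I may take $\mathcal T(P)$ to be the Dirichlet--Voronoi tiling of $\Lambda$'' presupposes that $\mathcal T(P)$ is affinely equivalent to a Voronoi tiling of some lattice; for a general parallelohedron that is precisely Conjecture~\ref{voronoi}, which is open and which the paper explicitly declines to assume. So as written your argument proves only the Voronoi instance of the lemma, not the lemma. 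The paper's proof avoids all metric considerations: if $P_1\cap P_2\neq\varnothing$, then by property~\ref{c3} the intersection is a centrally symmetric (standard) face of the tiling with center $\tfrac12\bigl(c(P_1)+c(P_2)\bigr)$; by the parity hypothesis that center lies in $\Lambda$, hence it is the center --- and therefore an interior point --- of a third tile $P_3$, and an interior point of a tile cannot lie on a face of the tiling. This uses only the affine and combinatorial properties of parallelohedral tilings and so holds in full generality. Note that your key insight --- look at the midpoint of the two centers, which is a lattice point precisely because of the parity hypothesis --- is the same as the paper's; you only need to replace the Voronoi-cell inequalities by the standard-face property to make the argument work without assuming the Voronoi conjecture.
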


\begin{proof}
According to property 3 of a parallelohedral tiling (see~page~\pageref{c3}),
\begin{equation}\label{50}
\frac{c(P_1) + c(P_2)}{2} \in (P_1\cap P_2).
\end{equation}

On the other hand, since $c(P_1)$ and $c(P_2)$ belong to the same class $\mod{2\Lambda}$,
$$\frac{c(P_1) + c(P_2)}{2} \in \Lambda.$$ 
Therefore $\frac{c(P_1) + c(P_2)}{2}$ is a center of some parallelohedron $P_3$ different from $P_1$ and $P_2$. Thus
$$\frac{c(P_1) + c(P_2)}{2} \in int\, P_3,$$
which is a contradiction to (\ref{50}). Lemma is proved.
\end{proof}

The technique used in the proof is rather standard. For example, similar methods were
used in~\cite{dsa}. (See also \cite[part~1, p.~277]{vor}, the description of parallelohedra of a given parity class adjoint to a given parallelohedron
by a hyperface.)

Let $\mathcal T_V(\Lambda)$ be the Voronoi tiling for some $d$-dimensional lattice $\Lambda \subset \mathbb R^d$. Then $\mathcal T_V(\Lambda)$
has a dual {\it Delaunay} tiling $\mathcal D(\Lambda)$. Every $(d-k)$-face $F$ in $\mathcal T_V(\Lambda)$ has a 
$k$-dimensional dual face $D(F)$ in $\mathcal D(\Lambda)$ such that
\begin{equation}\label{51}
D(F) = \conv \{ c(P): P\in \mathcal T_V(\Lambda)\; \text{and} \; F\in P \}.
\end{equation}
Moreover, the combinatorial structure of $D(F)$ completely describes the combinatorial structure of $\fan(F)$. In particular,
$\nu(F)$ is equal to the number of vertices of $D(F)$.

\begin{prop}\label{vcase}
The statement of theorem~\ref{2k} holds for Voronoi tilings.
\end{prop}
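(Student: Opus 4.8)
The plan is to combine the two facts recalled just above: that $\nu(F)$ equals the number of vertices of the dual Delaunay cell $D(F)$, with the vertices of $D(F)$ being precisely the centers $c(P)$ of the tiles $P$ with $F\subset P$; and Lemma~\ref{parity}. Since any two tiles containing $F$ share at least $F$ and hence intersect, Lemma~\ref{parity} tells us their centers lie in pairwise distinct parity classes of $\Lambda$. By itself this only yields $\nu(F)\leq [\Lambda:2\Lambda] = 2^d$, so the real point is to sharpen $2^d$ to $2^k$ by passing from $\Lambda$ to a rank-$k$ sublattice attached to $D(F)$.

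Concretely, I would assume $\mathbf 0\in\Lambda$, set $A=\aff D(F)$, and let $L$ be the $k$-dimensional linear subspace parallel to $A$ (here one uses that $D(F)$ is genuinely $k$-dimensional). The vertices of $D(F)$ are points of $\Lambda$ lying in $A$; fixing one of them, say $c(P_0)$, every other vertex $c(P)$ satisfies $c(P)-c(P_0)\in\Lambda\cap L =: \Lambda_L$. As a discrete subgroup of $L$ whose elements (the vertex differences) span $L$, the group $\Lambda_L$ is a lattice of rank exactly $k$, and the whole vertex set of $D(F)$ lies in the single coset $c(P_0)+\Lambda_L$.

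Next I would check that distinct vertices of $D(F)$ fall into distinct classes modulo $2\Lambda_L$. Indeed, if $c(P_1)-c(P_2)\in 2\Lambda_L$ for two vertices $c(P_1),c(P_2)$, then $c(P_1)-c(P_2)\in 2\Lambda$, so $P_1$ and $P_2$ lie in the same parity class; but $F\subset P_1\cap P_2$, whence $P_1\cap P_2\neq\varnothing$, and Lemma~\ref{parity} forces $P_1=P_2$. Thus reduction modulo $2\Lambda_L$ maps the vertex set of $D(F)$ injectively into $\Lambda_L/2\Lambda_L$, a group of order $2^k$; therefore $\nu(F)$, being the number of vertices of $D(F)$, is at most $2^k$.

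I do not foresee a genuine obstacle within the Voronoi case: the argument is in essence a counting argument modulo $2$ in the lattice $\Lambda_L$. The only delicate point is to be certain that $\Lambda_L$ has rank precisely $k$, so that $[\Lambda_L:2\Lambda_L]$ is exactly $2^k$ rather than smaller — and this is where one uses that $D(F)$ is full-dimensional in $A$ with lattice points for vertices. The hard part lies outside this proposition: for a general parallelohedral tiling not presumed affinely equivalent to a Voronoi one, there is no ready-made dual Delaunay cell $D(F)$, and one must manufacture a substitute for the sublattice $\Lambda_L$ and for the dimension bookkeeping directly from the combinatorics of $\fan(F)$ and property~\ref{c3}.
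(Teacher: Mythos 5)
Your proposal is correct and follows essentially the same route as the paper: identify $\nu(F)$ with the number of vertices of the Delaunay cell $D(F)$, observe that these vertices lie in (a coset of) a rank-$k$ sublattice of $\Lambda$ determined by $\aff D(F)$, and use Lemma~\ref{parity} to show the vertices occupy pairwise distinct classes modulo twice that sublattice, giving the bound $2^k$. Your only extra worry --- that the sublattice might have rank strictly less than $k$ --- is harmless anyway, since a smaller rank would only improve the bound.
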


\begin{proof}
Following the argument above, it is sufficient to prove that $D(F)$ has at most $2^k$ vertices. According to the properties of a
lattice Delaunay tiling,
$$dim\, \aff D(F) = k,$$
so the set of vertices of $D(F)$ is a subset of some $k$-dimensional lattice $\Lambda(F) \subset \Lambda$.

By lemma~\ref{parity}, every 2 vertices of $D(F)$ belong to different classes $\mod{2\Lambda}$, otherwise 2 parallelohedra of the same parity class
had to intersect at $F$. Therefore every 2 vertices of $D(F)$ belong to different classes $\mod{2\Lambda(F)}$. Since $\Lambda(F)$
has exactly $2^k$ classes $\mod{2\Lambda(F)}$, there are at most $2^k$ vertices of $D(F)$. Proposition is proved.
\end{proof}

\begin{remark}
Another approach to estimate the number of vertices of $D(F)$ is described in~\cite[Proposition~13.2.8]{dla}.
\end{remark}

Thus, if the Voronoi conjecture (conjecture~\ref{voronoi}) has a positive answer, theorem~\ref{2k} is proved.

In the way similar to corollary~\ref{finiteness}, proposition~\ref{vcase} implies that there are finitely many combinatorial types of $D(F)$ for fixed 
$k$ and regardless of $d$. Moreover, an algorithm for classification all possible lattice Delaunay $k$-cells is given in \cite{dsv} An idea of such
algorithms certainly belongs to Voronoi \cite{vor} who constructed an algorithm to classify all possible combinatorial types of Voronoi parallelohedra.

For an arbitrary parallelohedral tiling $\mathcal T(P)$ and its $(d-k)$-face $F$ it is also possible to introduce in the similar way as (\ref{51}) the set
$$ D(F^{d-k}) = \conv \{c(P') : P'\in \mathcal T(P) \; \text{and} \; F^{d-k}\subset P' \}. $$
As far as author knows, there are no satisfactory results on $\dim\aff  D(F)$ in the general case.

In the case 
\begin{equation}\label{10}
dim\,\aff  D(F) \leq k
\end{equation}
theorem~\ref{2k} easily follows from lemma~\ref{parity}. Yet the inequality (\ref{10}) remains an open problem.

\section{Outline of the proof}

The methods described above essentially involve the assumption that conjecture~\ref{voronoi} has a positive answer. However, theorem~\ref{2k}
can be proved in non-Voronoi case as well, by exploiting several other ideas.

The proof of theorem \ref{2k} consists of the 3 main steps.

\begin{enumerate}

\item Construct a set 
$$\{ F_1 = F, F_2, \ldots, F_m \}$$ 
of all faces of a parallelohedron $P_0 \in \mathcal T(P)$ such that every 2 faces of the set are equivalent by a $\Lambda$-translation. Prove
that $\nu(F_1) = m$.

\item Refine the notion of {\it an antipodal set} given in~\cite{dgr}. Prove that the set
$$W = \{\pi(F_i) : i = 1, 2, \ldots, m\},$$
is antipodal (here $\pi$ is the above defined projection along $\lin F$ onto the complementary space $(\lin F)^\bot$).

\item Estimate the cardinality of an arbitrary antipodal set in $\mathbb R^k$.

\end{enumerate}

The third step uses the technique introduced by Danzer and Gr\"unbaum in \cite{dgr}. However, \cite{dgr} deals with antipodal full-dimensional
point sets in $\mathbb R^k$, i.e. the sets $W$ satisfying $dim \aff W = k$. Although Danzer and Gr\"unbaum's theorem cannot be used
directly here, it is not hard to extend the technique to the class of antipodal sets satisfying the refined definition.

\section{Faces equivalent by translation}

To introduce a uniform notation, put $F_1 = F$. Choose a parallelohedron $P_0 \in \mathcal T(P)$ such that $F_1 \subset P_0$. Let
$$\{F_1, F_2, \ldots, F_m\}$$
be the set of all faces of $P_0$ equivalent to $F_1$ up to a $\Lambda$-translation.

Denote by $\mathbf t_{ij}$ the vector of $\Lambda$-translation such that
$$F_i + \mathbf t_{ij} = F_j.$$
For every $i,j \in \{1, 2, \ldots, m\}$ define
$$P_{ij} = P_0 + \mathbf t_{ij}.$$
Clearly, $P_{ij}\in \mathcal T$.

\begin{lemma}\label{valence}

$\nu(F_1) = m$.

\end{lemma}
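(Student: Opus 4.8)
The plan is to establish a bijection between the set $\{F_1,\dots,F_m\}$ of $\Lambda$-translates of $F_1$ inside $P_0$ and the set of tiles of $\mathcal T(P)$ containing $F_1$. First I would show that each tile $P'\supset F_1$ gives rise to one of the faces $F_i$: since $F_1$ is a face of both $P_0$ and $P'$, and $\mathcal T(P)$ is a parallelohedral tiling, $P'$ is a $\Lambda$-translate of $P_0$, say $P' = P_0 + \mathbf t$ with $\mathbf t\in\Lambda$. Then $F_1 - \mathbf t$ is a $\Lambda$-translate of $F_1$, and I must check it is actually a \emph{face} of $P_0$: indeed $F_1\subset P' = P_0+\mathbf t$ forces $F_1-\mathbf t\subset P_0$, and since $F_1-\mathbf t$ is a face of $P_0+\mathbf t$ translated back, i.e. a face of $P_0$. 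Being a $\Lambda$-translate of $F_1$ that is a face of $P_0$, it equals some $F_i$. This assigns to $P'$ the index $i$ with $F_1 = F_i + \mathbf t$, equivalently $\mathbf t = \mathbf t_{i1}$ and $P' = P_{i1}$ in the paper's notation. Wait—let me recheck the direction: $F_i + \mathbf t_{ij} = F_j$, so $F_i + \mathbf t_{i1} = F_1$, hence $\mathbf t = \mathbf t_{i1}$ and $P' = P_0 + \mathbf t_{i1} = P_{i1}$.

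Conversely, for each $i$ I would verify that $P_{i1} = P_0 + \mathbf t_{i1}$ is a tile containing $F_1$: it lies in $\mathcal T(P)$ because $\mathbf t_{i1}\in\Lambda$, and $F_1 = F_i + \mathbf t_{i1} \subset P_0 + \mathbf t_{i1} = P_{i1}$ since $F_i\subset P_0$. So the map $i\mapsto P_{i1}$ sends $\{1,\dots,m\}$ into the set of tiles containing $F_1$. It remains to see this map is injective and surjective. Surjectivity is exactly the first paragraph: every tile $P'\supset F_1$ is some $P_{i1}$. For injectivity, if $P_{i1} = P_{j1}$ then $\mathbf t_{i1} = \mathbf t_{j1}$ (a $\Lambda$-translate of a polytope determines the translation vector), hence $F_i = F_1 - \mathbf t_{i1} = F_1 - \mathbf t_{j1} = F_j$, so $i = j$ as the $F_i$ are distinct by construction.

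The main obstacle—really the only substantive point—is the claim used in the first paragraph: if a face $F_1$ of the tiling is contained in two tiles $P_0$ and $P'$, then $P'$ is a $\Lambda$-translate of $P_0$. This is where the face-to-face hypothesis and the structure of $\mathcal T(P)$ as a tiling by $\Lambda$-translates of a single parallelohedron enter. Since $\mathcal T(P)$ consists precisely of the translates $P + \mathbf v$, $\mathbf v$ ranging over an appropriate coset structure, and $\Lambda = \{c(P') : P'\in\mathcal T(P)\}$ is the lattice of centers with $\mathbf 0\in\Lambda$, any two tiles differ by an element of $\Lambda$; in particular $P' = P_0 + (c(P') - c(P_0))$ with $c(P') - c(P_0)\in\Lambda$. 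I would also need to confirm the elementary fact that $F_1 - \mathbf t$ is a face (not merely a subset) of $P_0$ when $F_1$ is a face of $P_0 + \mathbf t$, which is immediate from the face-lattice isomorphism induced by the translation $-\mathbf t$. With these two observations in hand the bijection is complete and $\nu(F_1) = m$ follows.
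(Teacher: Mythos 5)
Your proposal is correct and is essentially the paper's own argument: the paper also identifies each tile $P'\supset F_1$ with $P_0+\mathbf t$, observes that $F_1-\mathbf t$ is a face of $P_0$ equivalent to $F_1$ by a $\Lambda$-translation and hence equals some $F_i$, and counts the distinct tiles $P_{i1}$. You merely package the two inequalities ($\nu(F_1)\geq m$ and $\nu(F_1)\leq m$) as a single bijection, and you are slightly more careful than the paper in noting explicitly that $F_1-\mathbf t$ is a face (not just a subset) of $P_0$.
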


\begin{proof}

Since $F_i \subset P_0$, one has
$$F_1 = F_i + \mathbf t_{i1} \subset P_0 + \mathbf t_{i1} = P_{i1}.$$

Therefore there are at least $m$ parallelohedra of $\mathcal T(P)$ meeting at $F_1$, because the parallelohedra
$$P_0 = P_{11}, P_{21}, P_{31}, \ldots, P_{m1}$$
are pairwise different.

Suppose there exists one more parallelohedron $P'\in \mathcal T(P)$ such that $F_1\subset P'$. Thus there is a non-zero $\Lambda$-translation $\mathbf t$
such that $P' = P_0 + \mathbf t$. Obviously, $\mathbf t \neq \mathbf t_{i1}$, so $-\mathbf t\neq \mathbf t_{1i}$ for
every $i=1,2,\ldots, m$.

Let $F' = F_1 - \mathbf t$. Since $F_1 \subset P'$,
$$F' = F_1 - \mathbf t \subset P' - \mathbf t = P_0.$$
Consequently, $F'$ is a face of $P_0$ equivalent to $F_1$ up to a $\Lambda$-translation, hence $F' = F_i$ for some $i\in \{2,3,\ldots, m\}$.
By definition of $\mathbf t_{1i}$, one has 
$$F_1 - \mathbf t = F' = F_i = F_1 + \mathbf t_{1i}.$$
Therefore $-\mathbf t = \mathbf t_{1i}$, which is a contradiction. So, there are exactly $m$ parallelohedra of $\mathcal T(P)$ meeting at $F_1$, namely
$$P_0, P_{21}, P_{31}, \ldots, P_{m1}.$$
Thus $\nu(F_0) = m$.

\end{proof}

\section{Constructing an antipodal set}

Call a finite set $W\subset \mathbb R^k$ {\it antipodal} if for every pair of distinct points $x, y \in W$ there exists a pair of {\it distinct}
parallel hyperplanes $\beta, \gamma$ such that
$$x\in \beta, \quad y\in \gamma,$$
and $W$ lies between $\beta$ and $\gamma$. 

Recall that $\pi$ is a projection along $\lin F_1$ onto the complementary space $\mathbb R^k$.

\begin{lemma}\label{antipod}

Let $w_i = \pi(F_i)$. Then 
$$W = \{ w_i: i = 1, 2, \ldots, m \}$$
is an antipodal set in $\mathbb R^k$.

\end{lemma}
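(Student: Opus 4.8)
The plan is to verify the antipodality condition directly: given two distinct points $w_i, w_j \in W$, I must produce a pair of distinct parallel hyperplanes $\beta, \gamma$ in $\mathbb R^k = (\lin F_1)^\bot$ with $w_i \in \beta$, $w_j \in \gamma$, and all of $W$ sandwiched between them. The natural candidates come from the parallelohedron $P_{ij} = P_0 + \mathbf t_{ij}$, which contains both $F_j$ (since $F_j = F_i + \mathbf t_{ij}$ and $F_i \subset P_0$) and $F_i + \mathbf t_{ij} + \mathbf t_{ij}$-type faces; more to the point, $P_0$ and $P_{ij}$ are two tiles of $\mathcal T(P)$ that share the face $F_j$, so they lie in closed halfspaces on opposite sides of a supporting hyperplane $H$ of that shared face. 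I would push $H$ to be a hyperplane carrying a facet of $\mathcal T(P)$ separating $P_0$ from $P_{ij}$, then let $\beta$ be the hyperplane through $\pi(F_i)$ parallel to $\pi(H)$ and $\gamma$ the one through $\pi(F_j)$; the translation vector $\mathbf t_{ij}$ measures the gap between them.

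The key structural input is that all the faces $F_1, \dots, F_m$ are $\Lambda$-translates of one another and all lie on $P_0$, so their images $w_1, \dots, w_m$ under $\pi$ are exactly the translates $\pi(F_i) = w_1 + \pi(\mathbf t_{1i})$. Thus $W$ is a translate of the point set $\{\mathbf 0\} \cup \{\pi(\mathbf t_{1i})\}$. The containment $F_i \subset P_0$ for every $i$ then says that all these translation vectors, projected down, keep the corresponding translated copy of $F_1$ inside $P_0$; equivalently, $\pi(P_0)$ contains $w_1, \dots, w_m$. I would exploit the central symmetry of $P_0$: the facet hyperplane separating $P_0$ from a neighbor $P_0 + \mathbf t$ has an antipodal partner (the hyperplane through $c(P_0) - \frac12\mathbf t$ region), and the pair of parallel supporting hyperplanes of $P_0$ in the direction normal to that facet bound all of $\pi(P_0)$, hence all of $W$. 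The point $w_i$ will lie on the near translate of that supporting hyperplane and $w_j$ on the far one precisely because $F_j = F_i + \mathbf t_{ij}$ and $P_{ij}, P_0$ meet along $F_j$ from opposite sides of the facet.

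Concretely, the steps are: (1) fix $i \ne j$ and consider the two tiles $P_0$ and $P_{ij}$, both containing $F_j$; since $\mathcal T(P)$ is face-to-face and $F_j \ne P_0$ (it is a proper face), there is a facet $G$ of $\mathcal T(P)$ with $F_j \subseteq G$ such that $G$ lies in a hyperplane $H$ with $P_0$ on one closed side and some neighbor on the other; more carefully, pick a hyperplane $H$ such that $P_0$ and $P_{ij}$ lie in opposite closed halfspaces determined by $H$ and $F_j \subset H$ — this uses that $P_0 \cap P_{ij}$ is a face containing $F_j$ and that $P_0, P_{ij}$ are distinct tiles of a convex tiling. (2) Let $\mathbf n$ be the unit normal to $H$; then the two supporting hyperplanes of $P_0$ with normal $\mathbf n$ are distinct parallel hyperplanes, and every $F_\ell \subset P_0$ lies between them, hence $\pi$-projecting (note $\mathbf n \in (\lin F_1)^\bot$ because $\lin F_1 = \lin F_j \subseteq H$, so $\mathbf n \perp \lin F_1$), every $w_\ell$ lies between the projected hyperplanes $\pi(\beta_0), \pi(\gamma_0)$. (3) Check that $w_i$ and $w_j$ actually land on the two boundary hyperplanes and not strictly inside: $w_j = \pi(F_j)$ lies on $\pi(H)$ which is one of the two supporting hyperplanes of $\pi(P_0)$ with normal $\mathbf n$ (it supports because $P_0$ lies on one side of $H$), and $w_i = w_j - \pi(\mathbf t_{ij})$ lies on the translate of $\pi(H)$ by $-\pi(\mathbf t_{ij})$, which is the opposite supporting hyperplane since $P_{ij} = P_0 + \mathbf t_{ij}$ lies on the far side of $H$, i.e. $P_0$ lies on the far side of $H - \mathbf t_{ij}$, and $F_i = F_j - \mathbf t_{ij} \subset H - \mathbf t_{ij}$.

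The main obstacle I anticipate is step (1): rigorously extracting a single hyperplane $H$ that contains $F_j$ and separates $P_0$ from $P_{ij}$, and simultaneously is a supporting hyperplane of $P_0$ (so that its $\pi$-image genuinely bounds $\pi(P_0)$ rather than merely passing through $F_j$). In a general face-to-face tiling two tiles sharing a face $F_j$ of codimension $\geq 2$ need not be separated by a single hyperplane through a whole facet; one must instead argue that there \emph{exists} some hyperplane through $\aff F_j$ with $P_0$ and $P_{ij}$ weakly on opposite sides — which follows because $P_0 \cap P_{ij}$ is a proper face of both and convex bodies sharing a proper face can be weakly separated by a hyperplane through that face (indeed through its affine hull). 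The subtlety is then that this separating hyperplane also supports $P_0$ (true, since $P_0$ lies weakly on one side and $F_j \subset P_0 \cap H$), so its normal gives a legitimate pair of parallel supporting hyperplanes of $P_0$. Pinning down that the distinctness of $\beta$ and $\gamma$ holds — i.e. $\pi(\mathbf t_{ij}) \neq \mathbf 0$, equivalently $F_i \neq F_j$ — is immediate from $i \neq j$. I would organize the writeup around the observation that $P_0$ and $P_{ij} = P_0 + \mathbf t_{ij}$ are translates meeting along a common face, reducing everything to a statement about a single convex body and one of its proper faces.
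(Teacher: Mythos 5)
Your proposal is correct and follows essentially the same route as the paper: take a hyperplane $H$ separating $P_0$ from its translate $P_0 + \mathbf t_{ij}$ (it necessarily contains their common face and supports both tiles), observe that $H$ and $H - \mathbf t_{ij}$ are then a pair of parallel supporting hyperplanes of $P_0$ containing $F_j$ and $F_i$ respectively, and project by $\pi$. The one imprecise point is your justification of distinctness: $\pi(\mathbf t_{ij}) \neq \mathbf 0$ is \emph{not} equivalent to the two projected hyperplanes being distinct (one needs $\mathbf t_{ij} \notin \lin H$, since a nonzero translation parallel to $H$ leaves it fixed) — but this follows immediately because otherwise the full-dimensional tile $P_0 + \mathbf t_{ij}$ would have to lie in $H$ itself; the paper settles it by noting that $\pi(c(P_0))$ lies strictly between the two projected hyperplanes.
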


\begin{proof}

It is sufficient to show that for every integer $i,j, 1\leq i<j\leq m$ there exist two distinct parallel hyperplanes
$\gamma_{ij}$ and $\gamma_{ji}$ such that
$$w_i \in \gamma_{ij}, \quad w_j\in \gamma_{ji},$$
and $W$ lies between these hyperplanes.

In $\mathbb R^d$ take the hyperplane $\Gamma_{ij}$ that separates the parallelohedra $P_0$ and $P_{ji}$ from each other. ($\Gamma_{ij}$ has to be a 
supporting hyperplane to each of these 2 parallelohedra.)
Since $F_j\subset P_0$,
$$F_i = F_j + \mathbf t_{ji} \subset P_0 + \mathbf t_{ji} = P_{ji},$$
and therefore $F_i \subset P_0\cap P_{ji} \subset \Gamma_{ij}$.

By definition, put $\Gamma_{ji} = \Gamma_{ij} - \mathbf t_{ji}$. The hyperplane $\Gamma_{ij}$ is supporting to $P_{ji}$, so the
hyperplane $\Gamma_{ij}$ is supporting to $P_{ji} - \mathbf t_{ji} = P_0$. Also, since $F_i \subset \Gamma_{ij}$,
$$F_j = F_i - \mathbf t_{ji} \subset \Gamma_{ij} - \mathbf t_{ji} = \Gamma_{ji}.$$

Thus $\Gamma_{ij}$ and $\Gamma_{ji}$ are two parallel supporting hyperplanes satisfying
$$F_i \subset \Gamma_{ij}, \quad F_j\subset \Gamma_{ji}.$$

Now define 
$$\gamma_{ij} = \pi (\Gamma_{ij}) \quad \text{and} \quad \gamma_{ji} = \pi (\Gamma_{ji}).$$
Since $P_0$ lies between $\Gamma_{ij}$ and $\Gamma_{ji}$, then $W$ lies between $\gamma_{ij}$ and $\gamma_{ji}$. The hyperplanes $\gamma_{ij}$ and $\gamma_{ji}$
are distinct because the point $\pi(c(P_0))$ lies strictly between them. Thus the required hyperplanes are constructed for every pair
of points $(w_i, w_j)$. Lemma \ref{antipod} is proved.

\end{proof}

\section{Cardinality of an antipodal set}

\begin{lemma}\label{card}

Let $W = \{w_1, w_2, \ldots, w_m\} \subset \mathbb R^k$ be an antipodal set. Then $m\leq 2^k$.

\end{lemma}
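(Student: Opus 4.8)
The plan is to adapt the volume-compression argument of Danzer and Gr\"unbaum. Their theorem is stated only for antipodal sets $W$ with $\dim\aff W = k$, whereas under the refined definition $W$ may span a proper affine subspace, so I would first reduce to the full-dimensional situation and only then run the compression estimate.

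For the reduction, put $\ell = \dim\aff W$ and write $L = \aff W$, regarded as a copy of $\mathbb R^\ell$. I claim that $W$, viewed inside $L$, is again antipodal. Given distinct $x,y\in W$, choose parallel hyperplanes $\beta\ni x$ and $\gamma\ni y$ in $\mathbb R^k$ between which $W$ lies. Since $\beta\cap\gamma=\varnothing$, the subspace $L$ cannot be contained in $\beta$ (otherwise $y\in L\subseteq\beta$ would contradict $y\in\gamma$), hence $\beta\cap L$ is an affine subspace of $L$ of dimension $\ell-1$, i.e. a hyperplane of $L$; likewise for $\gamma\cap L$. These two hyperplanes of $L$ inherit the common direction of $\beta$ and $\gamma$, so they are parallel; they are nonempty and disjoint, hence distinct; and $W\subseteq L$ still lies between them. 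Thus $W$ is antipodal in $\mathbb R^\ell$, and since $\ell\le k$ it suffices to prove $m\le 2^\ell$. So from now on I may assume $\dim\aff W=k$, whence $P:=\conv W$ is a $k$-dimensional polytope with $\mathrm{vol}(P)>0$.

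For the compression step I would, for each $i$, set $P_i=\tfrac12(w_i+P)$, the image of $P$ under $u\mapsto\tfrac12(w_i+u)$. Two facts are needed. First, $P_i\subseteq P$, which is immediate from $w_i\in P$ and convexity of $P$. Second, $\mathrm{int}\,P_i\cap\mathrm{int}\,P_j=\varnothing$ for $i\ne j$, and this is where antipodality is used: if $z=\tfrac12(w_i+x)=\tfrac12(w_j+y)$ with $x,y\in P$, then testing against the linear functional $a$ whose level sets are the parallel hyperplanes $\beta\ni w_i$, $\gamma\ni w_j$ (normalised so that $a$ equals $b_i$ on $\beta$ and $b_j$ on $\gamma$, with $b_i<b_j$ and $a(P)\subseteq[b_i,b_j]$), the first representation gives $a(z)\le\tfrac12(b_i+b_j)$ and the second gives $a(z)\ge\tfrac12(b_i+b_j)$, forcing $a(x)=b_j$; hence $x$ lies on the proper face $P\cap\gamma$, i.e. $x\in\partial P$, whereas $z\in\mathrm{int}\,P_i$ would force $x=2z-w_i\in\mathrm{int}\,P$, a contradiction. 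Consequently $P_1,\dots,P_m$ are pairwise non-overlapping bodies contained in $P$, so
$$m\cdot 2^{-k}\,\mathrm{vol}(P)=\sum_{i=1}^m\mathrm{vol}(P_i)\le\mathrm{vol}(P),$$
and dividing by $\mathrm{vol}(P)>0$ yields $m\le 2^k$.

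I expect the first step — verifying that the witnessing pair of parallel hyperplanes restricts to a valid witnessing pair inside $\aff W$ — to be the only genuinely new point relative to Danzer and Gr\"unbaum, and hence the real obstacle to simply quoting their result; it is short but essential, since the refined definition drops full-dimensionality. Once that is settled, the disjointness of the interiors of the $P_i$, which is the technical core and the only place antipodality is actually invoked, proceeds exactly as in the classical proof.
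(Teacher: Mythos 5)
Your proof is correct and follows essentially the same Danzer--Gr\"unbaum volume-compression argument as the paper. The only (immaterial) differences are that the paper works directly with $k'$-dimensional volume inside $\aff W$ rather than first reducing to the full-dimensional case, and that it contracts by a factor $a<\tfrac12$ (so that the closed homothetic copies are strictly separated by the midplane between the two witnessing hyperplanes) and then lets $a\to\tfrac12$, whereas you contract by exactly $\tfrac12$ and instead prove disjointness of the interiors via the supporting functional.
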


\begin{proof}

Denote by $H_x^a$ the homothety with center $x$ and coefficient $a$.

Take an arbitrary $a\in (0, \frac 12)$ and prove that
$$ H_{w_i}^a(\conv  W) \cap H_{w_j}^a(\conv  W) = \varnothing. $$

Indeed, let $\beta_{ij}$ be the hyperplane parallel and equidistant to the hyperplanes $\gamma_{ij}$ and $\gamma_{ji}$ . Then 
the sets $H_{w_i}^a(\conv  W)$ and $H_{w_j}^a(\conv  W)$ lie in different open half-spaces in respect to $\beta_{ij}$ and
therefore do not intersect.

Let $k'\leq k$ be the affine dimension of $W$. Since 
$$H_{w_i}^a(\conv  W)\subset \conv W$$
and because the sets $H_{w_i}^a(W)$ are pairwise non-intersecting, one has
\begin{equation}\label{60}
vol_{k'}(\conv  W)\geq \sum\limits_{i=1}^m vol_{k'}(H_{w_i}^a(\conv  W)) = m\cdot a^{k'}\,vol_{k'}(\conv  W),
\end{equation}
where $vol_{k'}$ stands for the $k'$-dimensional volume.

Further, because $vol_{k'}(W)>0$, (\ref{60}) implies 
\begin{equation}\label{61}
m \leq a^{-k'}.
\end{equation}
The inequality (\ref{61}) holds for every $0<a<\frac 12$, so it holds also for $a = \frac 12$:
$$m  \leq 2^{k'}\leq 2^k,$$

Lemma \ref{card} is proved.

\end{proof}

Now the statement of theorem \ref{2k} is easily obtained by combining lemma~\ref{antipod} and lemma~\ref{card}.  \newline\qed

\section*{Acknowledgements}

There are a number of people who participated in the discussion of ideas and results, namely, R.~Erdahl, F.~Vallentin, A.~Sch\"urmann, A.~Garber, 
A.~Gavrilyuk and M.~Kozachok. 

The research was partially done at Fields Institute, Toronto, Canada during the Thematic Semester of Discrete Geometry and Applications and
at Queen's University, Kingston, Canada with an invitation of prof R.~Erdahl. Author also appreciates the effort by the Laboratory of Geometrical 
Methods in Mathematical Physics at MSU, Moscow to make possible the visit to Canada.

Finally, author acknowledges prof. N.~Dolbilin for scientific guidance, an introduction to the theory of parallelohedra and numerous
useful comments on the text.

\end{document}